\def\meas{{\rm meas}\, }
\def\diam{{\rm diam}\, }
\begin{document}
\newtheorem{theorem}{Theorem}[section]
\newtheorem{lemma}[theorem]{Lemma}
\theoremstyle{definition}
\newtheorem{definition}[theorem]{Definition}
\newtheorem{exercise}[theorem]{Exercise}
\newtheorem{example}[theorem]{Example}
\newtheorem{remark}[theorem]{Remark}
\title{On generalizations of the Synge-K\v r\'{\i}\v zek maximum angle condition for $d$-simplices}


\author{ Ali Khademi \and Sergey Korotov \and Jon Eivind Vatne}


\address{
Department of Computing, Mathematics and Physics, Western Norway University of Applied Sciences,
Post Box 7030, N-5020 Bergen, Norway, {\tt ali.khademi@hvl.no, a.khademi.math@gmail.com, sergey.korotov@hvl.no, jon.eivind.vatne@hvl.no}
}
\maketitle

\begin{abstract}
  In this note we present a generalization of the maximum angle condition,
  proposed by J. L. Synge in 1957 and M. K\v r\'{\i}\v zek in 1992 for trianglular
  and tetrahedral elements, respectively, for the case of higher-dimensional
  simplicial finite elements. Its relations to the other  angle-type
  conditions commonly used in finite element methods are analysed.
\end{abstract}

\section{Introduction}
\label{chapter-intro}

Let $\mathcal F = \{\mathcal T_h\}_{h \to 0}$ be a family of conforming (face-to-face)
triangulations $\mathcal T_h$ of a bounded polygonal domain.
In 1957, see \cite{Syn}, Synge proved that linear triangular
finite elements yield the optimal interpolation order in the $C$-norm provided the
following {\it maximum angle condition} is satisfied: there exists a constant $\gamma_0 < \pi$
such that for any triangulation $\mathcal T_h \in \mathcal F$ and any triangle $T \in \mathcal T_h$
the upper bound
\begin{equation}
  \gamma_T \le \gamma_0,
  \label{eq-upper-bound}
\end{equation}
holds, where $\gamma_T$ is the maximum angle of $T$. Later, Babu\v ska and Aziz \cite{BabAzi},
Barnhill and Gregory \cite{BarGre}, and Jamet \cite{Jam} independently derived the optimal interpolation
order in the energy norm of finite element approximations under the condition (\ref{eq-upper-bound}),
see also \cite{Kri-1991} in this respect.

In 1992, the Synge-condition (\ref{eq-upper-bound}) was generalized by K\v r\'{\i}\v zek \cite{Kri-1992}
to tetrahedral elements as follows: there exists a constant $\gamma_0 < \pi$
such that for any face-to-face tetrahedralization $\mathcal T_h \in \mathcal F$ and any
tetrahedron $T \in \mathcal T_h$ one has
\begin{equation}
  \gamma_{\rm D} \le \gamma_0 \quad \text{and}   \quad   \gamma_{\rm F} \le \gamma_0,
  \label{eq-upper-bound-3D}
\end{equation}
where $\gamma_{\rm D}$ is the maximum dihedral angles between faces of $T$ and
$\gamma_{\rm F}$ is the maximum angle in all four triangular faces of $T$. The
optimal interpolation estimates were obtained in \cite{Kri-1992} for various norms
under the condition (\ref{eq-upper-bound-3D}),
thus allowing the usage of many degenerating (skinny or flat)
tetrahedra unavoidably appearing during mesh generation and adaptivity processes
in various real-life applications \cite{CheDeyEdeFacTen,Ede}.

Recently, some higher-dimensional generalization of conditions (\ref{eq-upper-bound})
(and its relation to condition (\ref{eq-upper-bound-3D})) was proposed and analysed
in \cite{HanKorKri-2017} (see also \cite{HanKorKri-2018}).
However, this generalization is not of the
form of an upper estimate for (all or some) angles of the simplices generated
(cf. Definition \ref{def-semiregular-family}).


\section{Angle conditions in higher dimensions}
\label{chapter-denotation}

Recall that a $d$-simplex $S$ in ${\bf R}^d, d \in \{1, 2, 3, \dots \}$,  is the convex hull of $d+1$
vertices $A_0, A_1, \dots, A_{d}$ that do not belong to the same $(d-1)$-dimensional
hyperplane, i.e.,
$$
S =  {\rm conv}  \{A_0, A_1, \dots , A_{d}\}.
$$
Let
$$
F_i =  {\rm conv} \{A_0, \dots , A_{i-1}, A_{i+1}, \dots ,A_{d}\}
$$
be the facet of $S$ opposite to the vertex $A_i$ for $i \in \{0, \dots, d\}$.

For $d \ge 2$ the dihedral angle $\beta_{ij}$ between two facets $F_i$ and $F_j$ of $S$
is defined by means of the inner product of their outward unit normals $n_i$ and $n_j$
\begin{equation*}
\cos \beta_{ij} = - n_i  \cdot n_j.
\label{dihedral-angle}
\end{equation*}

In 1978, Eriksson introduced a generalization of the sine function to an arbitrary
$d$-dimensional spatial angle, see \cite[p.~74]{Eri}.

\begin{definition}
  Let $\hat A_i$ be the angle at the vertex $A_i$ of the simplex $S$. Then  $d$-sine of the angle
  $\hat A_i$ for $d>1$ is given by
\begin{equation}
\sin_d (\hat A_{i}  | A_0 A_1 \dots A_{d})
= \frac{ d^{d-1} \, (\meas_d S)^{d-1} }{(d-1) ! \, {\bf \Pi}_{j = 0, j \neq i}^{d} \meas_{d-1} F_j }.
\label{d-sine}
\end{equation}
\end{definition}

\begin{remark}
  The $d$-sine is really a generalization of the classical sine function. In order to
  see that consider an arbitrary triangle $A_{0} A_{1} A_{2}$. Let $\hat A_{0}$
be its angle at the vertex $A_{0}$. Then, obviously,
\begin{equation*}
\meas_2 (A_0 A_1 A_2) = {1\over 2} |A_0 A_1| |A_0 A_2| \sin \hat A_0.
\label{2d-area-triangle}
\end{equation*}
Comparing this relation with (\ref{d-sine}) for $d=2$, we find that
\begin{equation*}
  \sin \hat A_0 = \sin_2 (\hat A_0 | A_0 A_1 A_2).
\label{eq-compare}
\end{equation*}
\end{remark}

\begin{definition}
  A family $\mathcal F = \{\mathcal T_h \}_{h \to 0}$  of face-to-face partitions of a polytope
   $\overline \Omega \subset {\bf R}^d$ into
$d$-simplices is said to satisfy the {\it generalized minimum angle condition} if
there exists a constant $C > 0$ such that for any $\mathcal T_h \in \mathcal F$ and any
$S = {\rm conv} \{ A_0, \dots , A_{d} \} \in \mathcal T_h$
one has
\begin{equation}
\forall \ i \in \{0, 1, \dots , d \}
\qquad
\sin_d ( \hat A_i  | A_{0} A_{1} \dots A_{d})  \ge C > 0.
\label{conditions-4-any-dimension}
\end{equation}
\label{definition-min-angle-cond-4-any-dimension}
\end{definition}

This condition is investigated in the paper \cite{BraKorKri-Zlamal}. It generalizes the well-known
Zl\'amal minimum angle condition for triangles (see \cite{Cia,Zen,Zla}), which is stronger
than~(\ref{eq-upper-bound}).

\begin{definition}
  A family $\mathcal F  = \{\mathcal T_h \}_{h \to 0}$ of face-to-face partitions of a polytope
  $\overline \Omega \subset {\bf R}^d$ into
$d$-simplices is said to satisfy the {\it generalized maximum angle condition} if
there exists a constant $C > 0$ such that for any
$\mathcal T_h \in \mathcal F$ and any $S = {\rm conv} \{ A_0, \dots , A_{d} \} \in \mathcal T_h$
one can always choose $d$ edges of $S$, which, when considered as vectors, constitute
a (higher-dimensional) angle whose $d$-sine is bounded from below by the constant $C$.
\label{def-semiregular-family}
\end{definition}

\begin{remark}
  The generalized maximum angle condition is really weaker than
  the generalized minimum angle condition as it accepts e.g. degenerating
  path-simplices \cite{BraKorKriSol}, which obviously violate
  Definition \ref{definition-min-angle-cond-4-any-dimension}.
\end{remark}

The main result on the interpolation estimate is given in the following theorem.

\begin{theorem}
  Let $\mathcal F$ be a family of face-to-face
  partitions of a polytope $\overline \Omega \subset {\bf R}^d$
  into $d$-simplices satisfying
  the generalized maximum angle condition from Definition \ref{def-semiregular-family}. Then
there exists a constant $C > 0$ such that for any $\mathcal T_h \in \mathcal F$ and any
$S \in \mathcal T_h$ we have
\begin{equation*}
\| v - \pi_S v\|_{1, \infty} \le C h_S |v|_{2, \infty} \qquad \forall v \in \mathcal C^2(S),
\label{main-theorem}
\end{equation*}
where $\pi_S$ is the standard Lagrange linear interpolant and $h_S = \diam S$.
\end{theorem}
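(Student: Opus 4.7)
The plan is to reduce the $W^{1,\infty}$-estimate to a pointwise bound on the gradient of the interpolation error $e := v - \pi_S v$, then to control that gradient by testing it against the $d$ distinguished edge directions supplied by Definition \ref{def-semiregular-family} and inverting the resulting $d \times d$ linear system. Since $\pi_S v$ is linear, $\nabla^2 e = \nabla^2 v$ on $S$; since $e$ vanishes at every vertex of $S$, integrating $\nabla e$ along a segment of length at most $h_S$ from a point $x \in S$ to the nearest vertex gives $\|e\|_{0,\infty} \le h_S \|\nabla e\|_{0,\infty}$. Hence the theorem reduces to proving the gradient bound $\|\nabla e\|_{0,\infty} \le C h_S |v|_{2,\infty}$.

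To bound $\nabla e(x)$ at an arbitrary $x \in S$, the idea is to use the $d$ edges $\vec e_1, \ldots, \vec e_d$ provided by the hypothesis and set $u_k := \vec e_k / |\vec e_k|$. Writing $\vec e_k = A_{i_k} - A_{j_k}$, the scalar function $t \mapsto e(A_{j_k} + t \vec e_k)$ vanishes at $t = 0$ and $t = 1$, so Rolle's theorem supplies a point $\xi_k$ on the $k$-th edge with $u_k \cdot \nabla e(\xi_k) = 0$. A componentwise mean-value estimate for $\nabla e$ then yields $|u_k \cdot \nabla e(x)| = |u_k \cdot (\nabla e(x) - \nabla e(\xi_k))| \le |x - \xi_k|\, |v|_{2,\infty} \le h_S\, |v|_{2,\infty}$, using $|u_k| = 1$, $|x - \xi_k| \le h_S$, and $\nabla^2 e = \nabla^2 v$.

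Collecting these $d$ estimates into the linear system $M\, \nabla e(x) = b$, where $M$ is the $d \times d$ matrix with rows $u_k^\top$ and $|b_k| \le h_S\, |v|_{2,\infty}$, the argument closes provided $\|M^{-1}\|$ is bounded uniformly over the family $\mathcal F$. This is where Definition \ref{def-semiregular-family} enters quantitatively: both the $d$-sine of the angle spanned by $u_1, \ldots, u_d$ and the determinant $|\det M|$ are continuous on the compact space of $d$-tuples of unit vectors in ${\bf R}^d$, and they vanish on exactly the same linearly dependent configurations, so the hypothesis $\sin_d \ge C_0 > 0$ forces a uniform lower bound $|\det M| \ge C_1 > 0$. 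Combined with the trivial bound $\|M\|_\infty \le 1$ (each row is a unit vector), this yields a uniform upper bound on $\|M^{-1}\|$ and finishes the proof.

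The main obstacle, in my view, is precisely this last linking step: cleanly passing from the $d$-sine lower bound in Definition \ref{def-semiregular-family} to an effective lower bound on $|\det M|$, and hence to uniform control of $\|M^{-1}\|$, with constants depending only on $C_0$ and $d$. Once this is in place, the rest of the argument is a direct higher-dimensional transcription of the Synge/K\v r\'{\i}\v zek arguments for $d = 2, 3$, sharing their Rolle-plus-mean-value skeleton.
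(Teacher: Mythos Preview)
The paper does not give its own proof of this theorem; it simply refers to \cite{HanKorKri-2017}. Your argument is correct and is precisely the higher-dimensional Synge--K\v r\'{\i}\v zek skeleton one would expect in that reference: Rolle on each distinguished edge, a mean-value step using $\nabla^2 e=\nabla^2 v$ and convexity of $S$, and inversion of the $d\times d$ matrix $M$ of unit edge directions.

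Your handling of what you call the ``main obstacle'' --- passing from $\sin_d\ge C_0$ to a uniform lower bound on $|\det M|$ --- is also the device the present paper uses elsewhere (see Lemma~\ref{lemma-cont} and the proof of Theorem~\ref{theorem-main-equivalence}): both $\sin_d$ and $|\det M|=\meas_d$ of the spanned parallelotope are continuous on the compact space $(S^{d-1})^d$ and vanish exactly on the linearly dependent tuples, so a positive lower bound for one forces a positive lower bound for the other. One minor slip: ``$\|M\|_\infty\le 1$'' is not literally true for unit-vector rows (the $\ell^1$-norm of a unit vector can be as large as $\sqrt d$), but this is irrelevant --- any uniform bound on the entries of $M$ together with $|\det M|\ge C_1>0$ already yields a uniform bound on $\|M^{-1}\|$, e.g.\ via the cofactor formula or, again, by compactness.
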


For the proof see \cite{HanKorKri-2017}.

\begin{definition}
    A family $\mathcal F  = \{\mathcal T_h \}_{h \to 0}$ of face-to-face partitions of a polytope
  $\overline \Omega \subset {\bf R}^d$ into
$d$-simplices is said to satisfy the {\it $d$-dimensional  maximum angle condition} if
    there exists a constant $\gamma_0<\pi$ such that for $\mathcal T_h \in \mathcal F$
    and any simplex $S \in \mathcal T_h$  and any subsimplex $S'\subseteq S$ with vertex
    set contained in the vertex set of $S$, the maximum dihedral angle in $S'$ is less
    than or equal to $\gamma_0$.
\label{def-krizek-general}
\end{definition}

\begin{remark}
  It is worth to mention that the  maximum angle condition is only sufficient
  to provide the convergence of the finite element approximations as shown in
  \cite{HanKorKri}.
\end{remark}

\section{Main results}

In this section we present the main results of the work.

\begin{lemma}
For a $d$-simplex we observe that
 \begin{equation}
\sin_d (\hat A_{i}  | A_0 A_1 \dots A_{d})
= \sin_{d-1} (\hat A_{i}  | A_0 A_1 \dots A_{d-1})\prod_{j=0, j\neq i}^{d-1}\sin(\beta_j),
\label{d-sine-product}
\end{equation}
 where $\beta_j$ is the dihedral angle between the facet opposite to $A_j$ and
 the facet opposite to  $A_d$.
\label{lemma-sines-tetrahedron-ABCD}
\end{lemma}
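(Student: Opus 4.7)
The plan is to expand both sides of (\ref{d-sine-product}) directly from the defining formula (\ref{d-sine}) and verify that the ratio $\sin_d/\sin_{d-1}$ reduces to $\prod_{j\neq i}\sin\beta_j$. Write $F_j'=F_j\cap F_d$ for the common $(d-2)$-dimensional face of the facet $F_j$ (opposite $A_j$ in $S$) and the facet $F_d$ (opposite $A_d$ in $S$); note that $F_j'$ is precisely the facet of the $(d-1)$-simplex $A_0A_1\ldots A_{d-1}$ opposite the vertex $A_j$, so it is the object that appears in the denominator of $\sin_{d-1}(\hat A_i\mid A_0A_1\ldots A_{d-1})$.

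The geometric input comes in three pieces. First, denoting by $h_d$ the altitude from $A_d$ to the hyperplane of $F_d$, one has the standard height formula
\begin{equation*}
\meas_d S=\tfrac{1}{d}\,h_d\,\meas_{d-1}F_d.
\end{equation*}
Second, since $F_j'$ is a facet of the $(d-1)$-simplex $F_j$ opposite the vertex $A_d\in F_j$, letting $t_j=\mathrm{dist}(A_d,\mathrm{aff}\,F_j')$ one has
\begin{equation*}
\meas_{d-1}F_j=\tfrac{1}{d-1}\,t_j\,\meas_{d-2}F_j'\qquad (j=0,\ldots,d-1,\ j\neq i).
\end{equation*}
Third, inside the $2$-plane through $A_d$ orthogonal to $\mathrm{aff}\,F_j'$, the point $A_d$, its projection onto $\mathrm{aff}\,F_j'$, and its projection onto $\mathrm{aff}\,F_d$ form a right triangle in which the angle at the first projection is exactly the dihedral angle $\beta_j$; this yields the key relation
\begin{equation*}
h_d=t_j\sin\beta_j.
\end{equation*}

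Now I would substitute everything into the ratio. Separating the $j=d$ factor from $\prod_{j\neq i}^{d}\meas_{d-1}F_j$ produces a single factor $\meas_{d-1}F_d$; combining it with $(\meas_d S)^{d-1}$ via the height formula converts the $\meas_d S$ factors into $h_d^{d-1}\,(\meas_{d-1}F_d)^{d-1}/d^{d-1}$, which cancels the $d^{d-1}$ and absorbs $(\meas_{d-1}F_d)^{d-2}$ from $\sin_{d-1}$. The factorials collapse as $(d-1)!=(d-1)(d-2)!$, and the $(d-1)$ factors of $1/(d-1)$ arising from rewriting each $\meas_{d-1}F_j$ in terms of $\meas_{d-2}F_j'$ cancel against $(d-1)^{d-1}/[(d-1)(d-1)^{d-2}]=1$. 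What survives is
\begin{equation*}
\frac{\sin_d(\hat A_i\mid A_0\ldots A_d)}{\sin_{d-1}(\hat A_i\mid A_0\ldots A_{d-1})}
=\prod_{j=0,\,j\neq i}^{d-1}\frac{h_d}{t_j},
\end{equation*}
and the third geometric identity $h_d/t_j=\sin\beta_j$ finishes the proof.

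The only real difficulty is bookkeeping: tracking the exponents $d-1$ versus $d-2$ that appear asymmetrically in $\sin_d$ and $\sin_{d-1}$, and correctly isolating the $(d-1)$ factors indexed by $j\in\{0,\ldots,d-1\}\setminus\{i\}$. There is no conceptual obstacle once the shared ridge $F_j\cap F_d=F_j'$ is identified, because then every quantity on the right-hand side has an intrinsic description entirely within the facet $F_d$ or within the perpendicular $2$-plane where $\beta_j$ lives.
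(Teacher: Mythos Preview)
Your argument is correct. The three geometric inputs are exactly right: the height formula for $\meas_d S$, the analogous formula inside each facet $F_j$ relating $\meas_{d-1}F_j$ to $\meas_{d-2}F_j'$ via the altitude $t_j$ from $A_d$, and the right-triangle identity $h_d=t_j\sin\beta_j$ in the $2$-plane orthogonal to the ridge $F_j'=F_j\cap F_d$. The bookkeeping also checks out: after the substitutions the powers of $\meas_{d-1}F_d$, the factors $d^{d-1}$, the $(d-1)$'s from the factorial and from the $(d-1)$-fold product, and the $\meas_{d-2}F_j'$ all cancel, leaving precisely $\prod_{j\ne i}h_d/t_j=\prod_{j\ne i}\sin\beta_j$.

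As for comparison: the paper does not give its own proof of this lemma at all; it simply refers the reader to Eriksson \cite[pp.~74--76]{Eri}. Your computation is in fact the natural direct verification from the definition (\ref{d-sine}) and is essentially the argument Eriksson uses, so there is no substantive difference in approach---you have supplied the details that the paper outsourced.
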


For the proof see \cite[p. 74--76]{Eri}.

\begin{remark}
  The immediate consequence of Lemma \ref{lemma-sines-tetrahedron-ABCD}
  is that $\sin_d$ is always less than or equal to one.
\end{remark}
As usual, we denote by $S^{d-1}$ the unit sphere in $\mathbb{R}^d$, and by $(S^{d-1})^N$ the $N$-fold cartesian product, i.e. the space of $N$ unit vectors in $\mathbb{R}^d$.
\begin{lemma}[Properties of $\sin_d$]
  We can define $\sin_d$ as a function on the space of $d$ unit vectors
  $\vec{t}_1,\dots,\vec{t}_d$
  in $\mathbb{R}^d$, $\left(S^{d-1}\right)^d$, with the following properties:
\begin{itemize}
\item[a)] On the open, dense subset of $d$ vectors spanning $\mathbb{R}^d$
  (linearly independent), $\sin_d(\vec{t}_1,\dots,\vec{t}_d)$ is equal to
  the expression from \eqref{d-sine} with $\hat{A}_i$ equal to the origin
  and the other points from $\{A_0,\dots,A_d\}$ equal to the endpoints of
  the vectors.
\item[b)] On the closed subspace of $d$ vectors not spanning $\mathbb{R}^d$
  (linearly dependent), $\sin_d(\vec{t}_1,\dots,\vec{t}_d)=0$.
\item[c)] $\sin_d$ is continuous.
\end{itemize}
\label{lemma-cont}
\end{lemma}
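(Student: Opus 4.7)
My plan is to handle the three clauses in sequence, with (c) carrying essentially all the substance. For part (a), given a linearly independent $d$-tuple $(\vec{t}_1,\dots,\vec{t}_d)\in(S^{d-1})^d$, the simplex $S=\mathrm{conv}\{0,\vec{t}_1,\dots,\vec{t}_d\}$ is nondegenerate, so the expression~\eqref{d-sine} with $A_i=0$ and $A_k=\vec{t}_k$ is well-defined and depends only on the $\vec{t}_k$. By writing arbitrary $A_k=\lambda_k\vec{t}_k$ and observing that each length $\lambda_k$ appears to the $(d-1)$st power in both numerator and denominator, the defining ratio simplifies (with the constants $d^{d-1}$, $(d-1)!$ and $d!$ also canceling) to
\[
  \sin_d(\vec{t}_1,\dots,\vec{t}_d) \;=\; \frac{|\det(\vec{t}_1,\dots,\vec{t}_d)|^{\,d-1}}{\prod_{k=1}^d |\wedge_{j\neq k}\vec{t}_j|},
\]
a smooth rational function on the open independent locus. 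Part (b) is then just a definition on the complement.

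For part (c), continuity on the open dense set is immediate from the display above. The remaining work is continuity at a boundary point $\vec{t}^{\,*}$ with linearly dependent components. I would argue by induction on $d$, with the base case $d=2$ trivial (ordinary sine). For the inductive step Lemma~\ref{lemma-sines-tetrahedron-ABCD} combined with $\sin\beta_j\in[0,1]$ and the freedom to designate any index as the ``last'' vertex gives, on the independent locus,
\[
  \sin_d(\vec{t}) \;\le\; \sin_{d-1}(\vec{t}_k : k\neq k_0)\quad\text{for every } k_0,
\]
where $\sin_{d-1}$ is defined on $(d-1)$-tuples of unit vectors in $\mathbb{R}^d$ by the analogous wedge-product formula; since its value depends only on the Gram matrix, the inductive hypothesis (formally a statement about $(d-1)$-tuples in $\mathbb{R}^{d-1}$) applies.

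Two cases remain at $\vec{t}^{\,*}$. If some $(d-1)$-subset $\{\vec{t}_k^{\,*}:k\neq k_0\}$ is itself linearly dependent, then by induction $\sin_{d-1}(\vec{t}_k^{(n)}:k\neq k_0)\to 0$ along any approach $\vec{t}^{(n)}\to\vec{t}^{\,*}$, and the displayed inequality forces $\sin_d(\vec{t}^{(n)})\to 0$. The main case---and the expected obstacle, since the raw formula from (a) is a genuine $0/0$ limit---is when $\vec{t}^{\,*}$ has rank exactly $d-1$ with every $(d-1)$-subset linearly independent; then each $\vec{t}_\ell^{\,*}$ lies in the span of the others, so the perpendicular distance $\|\vec{t}_\ell^{\perp,*}\|$ vanishes for every $\ell$. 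Using the base-times-height identity $|\det(\vec{t})|=\|\vec{t}_\ell^\perp\|\cdot|\wedge_{k\neq\ell}\vec{t}_k|$, the formula from (a) rewrites as
\[
  \sin_d(\vec{t}) \;=\; \frac{\prod_\ell \|\vec{t}_\ell^\perp\|}{|\det(\vec{t})|} \;=\; \frac{\prod_{m\neq\ell_0}\|\vec{t}_m^\perp\|}{|\wedge_{k\neq\ell_0}\vec{t}_k|}
\]
for any $\ell_0$; along the approach the denominator tends to the positive limit $|\wedge_{k\neq\ell_0}\vec{t}_k^{\,*}|$ while the numerator is a product of $d-1$ perpendicular distances each tending to $0$, so the ratio tends to $0$, which is the value assigned in (b).
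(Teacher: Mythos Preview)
Your argument is correct. Parts (a) and (b), and the inductive treatment of Case~1 in (c), match the paper's proof almost exactly: both use the product formula of Lemma~\ref{lemma-sines-tetrahedron-ABCD} (you via the inequality $\sin_d\le\sin_{d-1}$) to reduce to the inductive hypothesis when some proper subset degenerates.

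The genuine difference is in Case~2, where the limit configuration has rank exactly $d-1$ with every $(d-1)$-subset independent. The paper stays with the product formula and argues geometrically: since all facets through the origin remain nondegenerate while the whole simplex collapses into a hyperplane, some dihedral angle $\beta_j$ must tend to $0$ or $\pi$, hence $\sin\beta_j\to 0$ and the product vanishes. You instead derive the closed form $\sin_d(\vec{t})=|\det(\vec{t})|^{d-1}/\prod_k|\wedge_{j\neq k}\vec{t}_j|$, rewrite it via base--times--height as $\prod_{m\neq\ell_0}\|\vec{t}_m^\perp\|\,/\,|\wedge_{k\neq\ell_0}\vec{t}_k|$, and read off that the numerator has $d-1$ factors each tending to $0$ while the denominator tends to a positive limit. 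Your route is more algebraic and self-contained: it makes the $0/0$ cancellation explicit and does not rely on the geometric claim about dihedral angles, which the paper states without detailed justification. The paper's route, on the other hand, keeps everything inside the Eriksson product formula and avoids introducing the wedge-product expression. Both are short once the case split is made; yours is arguably cleaner at the critical step.
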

\begin{proof}
All these properties are implicitly found in Eriksson's work \cite{Eri}.

Part a): On page 72 of \cite{Eri}, he notes that the definition of $\sin_d$
does not change if one of the vectors is multiplied by a nonzero constant.
Thus we can normalize all the vectors and use only unit vectors.

Part b): We take this as the definition of $\sin_d$ for linearly dependent
vectors.

Part c): On the two sets in a) and in b) considered separately, $\sin_d$ is
obviously continuous. We must check that when a tuple of linearly independent
unit vectors $(\vec{t}_1,\dots,\vec{t}_d)$ approaches a linearly dependent
limit, $\sin_d$ approaches zero.
This is clear for $d=2$ and can be proved by induction for any $d$ using the product formula from Lemma~\ref{lemma-sines-tetrahedron-ABCD}:
If any subset of the vectors becomes close to a linearly dependent set, we can apply the product formula so that the evaluation of $\sin_{d-1}$ involves those vectors, so the product tends to zero by the induction hypothesis.
Otherwise, to get a degeneration, a dihedral angle must tend to zero or $\pi$,
so that the product tends to zero by the continuity of the ordinary (two-dimensional) sine.
\end{proof}
\begin{remark}
It is necessary to restrict the domain of definition for $\sin_d$ in order to have continuity.
Otherwise we would have the following problem:
If one of the vectors tend to zero, the limit would include a zero vector and thus be a linearly dependent set.
By part b), $\sin_d$ should be zero.
But since multiplying an edge by any nonzero constant leaves $\sin_d$ unchanged, this would violate continuity.
\end{remark}

Let $S_n= \operatorname{conv}\{A^n_0,A^n_1,\dots,A^n_d\}$, $n=1, 2,\dots$, be any infinite sequence
of simplices, and consider the vectors
\begin{equation*}
\vec{t}_{X_nY_n} = \frac{X_nY_n}{|X_nY_n|}\quad \text{ for any pair } \{X_n,Y_n\}\subset\{A^n_0,A^n_1,\dots,A^n_d\},
\end{equation*}
where the symbol $|\cdot|$ denotes the length of the vector.

\begin{lemma}
There is a subsequence $\{S_{n'}\}\subset\{S_n\}$ such that all the sequences $\{\vec{t}_{X_nY_n}\}$ converge.
\label{lemma-subsequence}
\end{lemma}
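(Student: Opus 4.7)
The plan is to apply sequential compactness of the unit sphere together with a finite iteration of subsequence extractions, since there are only finitely many pairs of vertices to consider.

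First I would count: since each simplex $S_n$ has exactly $d+1$ vertices, there are precisely $\binom{d+1}{2}$ unordered pairs $\{X_n,Y_n\}$, and correspondingly finitely many sequences of unit vectors $\{\vec{t}_{X_nY_n}\}_{n=1}^\infty$ to control. Each such sequence lies in the unit sphere $S^{d-1}\subset\mathbb{R}^d$, which is compact (closed and bounded in $\mathbb{R}^d$), so by the Bolzano--Weierstrass theorem each individual sequence has a convergent subsequence.

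Next I would enumerate the pairs as $P_1,P_2,\dots,P_N$ with $N=\binom{d+1}{2}$, and extract subsequences iteratively. Starting from the original index set, pick a subsequence along which $\vec{t}_{P_1}$ converges; from this, pick a further subsequence along which $\vec{t}_{P_2}$ converges (the first sequence still converges, as convergence is preserved under passing to a subsequence); continue in this way through all $N$ pairs. After $N$ steps we obtain a single subsequence $\{S_{n'}\}\subset\{S_n\}$ along which every one of the finitely many vector sequences converges simultaneously. Equivalently, one can phrase this as compactness of the finite product $(S^{d-1})^N$.

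There is essentially no obstacle here: the argument relies only on compactness of $S^{d-1}$ and the finiteness of the vertex-pair count. The only minor point worth flagging is that the limit vectors need not themselves arise as edge directions of any simplex, and in particular the limiting configuration may well be linearly dependent; this degeneracy is not excluded by the lemma and indeed is precisely the situation that will have to be analysed afterwards using part (b) and part (c) of Lemma~\ref{lemma-cont}.
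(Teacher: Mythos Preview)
Your proof is correct and follows essentially the same approach as the paper: both arguments rest on the compactness of the finite product $(S^{d-1})^N$ with $N=\binom{d+1}{2}$, the paper invoking it directly while you spell out the equivalent iterated subsequence extraction.
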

\begin{proof}
  The sequence of tuples $\{\big(\vec{t}_{X_nY_n}\big)| \text{ for all pairs }\{X_n,Y_n\} \subset\{A^n_0,\dots,A^n_d\}\}$
  is an infinite subset of the space $(S^{d-1})^N$, where $N=\binom{d+1}{2}$ is the number of pairs.
Since this space is compact, the sequence has at least one limit point.
Let $\{S_{n'}\}$ be a subsequence converging to such a limit point.
\end{proof}

\begin{theorem}
  Let $S_n= \operatorname{conv}(A^n_0,A^n_1,\dots,A^n_d)$, $n=1,2,\dots$, be an infinite
  sequence of simplices. If the sequence violates the condition from
  Definition~\ref{def-krizek-general} then it also violates the generalized maximum angle
  condition in Definition~\ref{def-semiregular-family}.
  \label{semi_implies_krizek}
\end{theorem}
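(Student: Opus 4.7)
The plan is to establish, via compactness, that if $\{S_n\}$ violates Definition~\ref{def-krizek-general} then along a suitable subsequence all limit unit edge vectors lie in a single hyperplane of $\mathbb{R}^d$; by the continuity of $\sin_d$ from Lemma~\ref{lemma-cont}, this forces $\sin_d$ of every $d$-tuple of edges of $S_n$ to tend to zero, which is exactly the negation of Definition~\ref{def-semiregular-family}.

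The first step is to extract a canonical bad subsimplex. Since $\{S_n\}$ violates Definition~\ref{def-krizek-general}, I may pass to a subsequence along which some subsimplex of $S_n$ has a dihedral angle tending to $\pi$. Because there are only finitely many vertex subsets of $\{A_0^n,\dots,A_d^n\}$ and finitely many pairs of facets within each, pigeonhole lets me restrict further to a subsequence on which a fixed subset $V'$ of size $k+1$, with $2\le k\le d$, spans a $k$-subsimplex $S_n'$ whose dihedral angle between a fixed pair of facets tends to $\pi$. Using the scale-invariance of $\sin_d$ I renormalize so that $\operatorname{diam}(S_n)=1$; fixing $A_0^n$ at the origin places all vertices in the closed unit ball, so Lemma~\ref{lemma-subsequence} combined with compactness of the ball permits a further subsequence on which every unit edge vector $\vec{t}_{A_i^n A_j^n}$ converges to some $\vec{v}_{ij}$ and every vertex position $A_j^n$ converges to some $A_j^\infty\in\mathbb{R}^d$.

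The geometric crux is the affine degeneracy of the limit. A nondegenerate $k$-simplex has all dihedral angles strictly less than $\pi$ and these depend continuously on the vertex positions, so the limit vertices of $V'$ cannot form a nondegenerate $k$-simplex: either some collapse onto one another, or they are distinct but affinely dependent. In either case the affine hull of the $k+1$ limit vertices of $V'$ has dimension at most $k-1$. A single affine relation among these vertices extends (by padding with zero coefficients) to an affine relation on the full $d+1$ limit vertices $A_0^\infty,\dots,A_d^\infty$, so they all lie in some hyperplane $H\subset\mathbb{R}^d$; for every pair with $A_i^\infty\neq A_j^\infty$ the vector $\vec{v}_{ij}=(A_j^\infty-A_i^\infty)/|A_j^\infty-A_i^\infty|$ is therefore parallel to $H$.

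The main obstacle is to handle \emph{collapsed} edges, where $A_i^\infty=A_j^\infty$ and the limit direction $\vec{v}_{ij}$ is not determined by the limit positions. My plan is to resolve this by iteratively rescaling at each collapsing cluster: multiplying the configuration by $1/|A_i^n A_j^n|$ expands the collapsed edge to unit length without altering any $\sin_d$ or any dihedral angle, so the previous steps can be re-run on the rescaled configuration and iterated until every $\vec{v}_{ij}$ is pinned inside a $(d-1)$-dimensional subspace. Equivalently, one can use the interior-angle relations in subtriangles $A_i^n A_j^n A_l^n$ whose third vertex does not collapse, combined with the dihedral-angle degeneracy propagating through the cluster, to force $\vec{v}_{ij}\in H$. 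Once every $\vec{v}_{ij}$ lies in $H$, any $d$ of them are linearly dependent, so by Lemma~\ref{lemma-cont} the value of $\sin_d$ on each $d$-tuple of edges of $S_n$ tends to zero; since there are only finitely many such $d$-tuples, for every $C>0$ only finitely many $n$ admit any $d$-tuple with $\sin_d\ge C$, violating Definition~\ref{def-semiregular-family}.
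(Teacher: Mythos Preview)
Your overall architecture coincides with the paper's: pass to a subsequence on which every unit edge direction converges, show that all $\binom{d+1}{2}$ limit directions lie in a linear subspace of dimension at most $d-1$, and invoke Lemma~\ref{lemma-cont}. The difference is in how the span bound is obtained. The paper never introduces vertex positions: it argues that the limit edge directions of the degenerating subsimplex $S_n'$ span at most $d'-1$ dimensions, and then uses the elementary fact that for every $n$ the vector $\vec t_{X_nY_n}$ lies in the plane of $\vec t_{A_0^nX_n}$ and $\vec t_{A_0^nY_n}$ to conclude that adjoining only the $d-d'$ directions $\vec t_{A_0A_{d'+1}},\dots,\vec t_{A_0A_d}$ already captures every limit edge direction, so the total span has dimension at most $d-1$.

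Your route through limit vertex positions genuinely creates the collapsed-edge obstacle, and the remedies you sketch do not close it. The hyperplane $H$ you obtain is merely \emph{some} hyperplane containing the affine hull of the limit vertices; when vertices coalesce, that hull may have dimension strictly below $d-1$, so $H$ is far from unique and nothing in your construction selects one that also contains the collapsed directions. Concretely, in $\mathbb{R}^3$ take $A_0^n=0$, $A_1^n=(1,0,0)$, $A_2^n=(2,\epsilon_n,0)$, $A_3^n=(0,\epsilon_n^2,\epsilon_n^2)$: the angle of the triangle $A_0A_1A_2$ at $A_1$ tends to $\pi$, all four limit vertices lie on the $x$-axis, and the choice $H=\{z=0\}$ is fully consistent with your argument, yet $\vec v_{03}=(0,1,1)/\sqrt{2}\notin H$. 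Your rescaling fix does not help, because blowing up by $1/|A_0^nA_3^n|$ sends the non-collapsing vertices to infinity, destroying the compactness that produced $H$ in the first place and offering no link between a hyperplane found after rescaling and the original one. The subtriangle fix also fails in precisely the bad case: when $A_i^\infty=A_j^\infty\ne A_l^\infty$, the limits $\vec v_{il}$ and $\vec v_{lj}$ are antiparallel, so their span is one-dimensional and places no constraint on $\vec v_{ij}$. The clean way out is exactly the paper's: abandon vertex positions and argue directly on the compact space $(S^{d-1})^N$ of unit edge directions.
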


\begin{proof}
By Lemma~\ref{lemma-subsequence}, we can assume that the limiting vectors $\vec{t}_{XY}$ for all pairs
  $\{X_n,Y_n\}\subset\{A_0^n,\dots,A_d^n\}$ exist.
Let $S_n'\subseteq S_n$ be as in Definition~\ref{def-krizek-general}, and assume that there
is an infinite sequence of dihedral angles of $S_n'$
tending to $\pi$.
We need to show that all the $\sin_d$ tend to zero.
Let $d'$ be the dimension of $S_n'$.
By reordering, we can assume that the vertices of $S_n'$ are $A^n_0,\dots,A^n_{d'}$.
Then since one of the dihedral angles tends to $\pi$, the set of limit vectors $\vec{t}_{XY}$
for all pairs $\{X_n,Y_n\}\subset \{A_0^n,\dots,A_{d'}^n\}$ only span a space of dimension $\leq d'-1$.
Adding the vectors (say) $\vec{t}_{A_0A_{d'+1}},\vec{t}_{A_0A_{d'+2}},\dots,\vec{t}_{A_0A_d}$ adds no more than $d-d'$ to the dimension of the span.
Any remaining vector $\vec{t}_{XY}$ will be in this span, more precisely in the span of $\vec{t}_{A_0X}$ and $\vec{t}_{A_0Y}$.
Therefore any choice of $d$ vectors from the set of $\vec{t}_{XY}$ can only span a space of dimension $\leq d-1$, so $\sin_d$ is zero by Lemma~\ref{lemma-cont} b).
By continuity of $\sin_d$ (Lemma~\ref{lemma-cont} c)), the generalized maximum angle condition in Definition~\ref{def-semiregular-family} is violated.
\end{proof}

\begin{theorem}
  The conditions in Definition~\ref{def-krizek-general} and in Definition~\ref{def-semiregular-family} are equivalent.
  \label{theorem-main-equivalence}
\end{theorem}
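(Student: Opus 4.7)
My plan is to reduce the equivalence to the direction not yet established. Theorem~\ref{semi_implies_krizek} already delivers that failure of Definition~\ref{def-krizek-general} implies failure of Definition~\ref{def-semiregular-family}, equivalently that Definition~\ref{def-semiregular-family} implies Definition~\ref{def-krizek-general}. It therefore remains to prove the converse, which I would show via its contrapositive: if a sequence $S_n$ of $d$-simplices violates Definition~\ref{def-semiregular-family}, then it also violates Definition~\ref{def-krizek-general}.

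I would proceed by induction on $d$. The base case $d=2$ is immediate: if every pair of edges of a triangle has $\sin_2\to 0$ then each vertex angle tends to $0$ or $\pi$, and since the three angles sum to $\pi$ exactly one of them must tend to $\pi$, violating Definition~\ref{def-krizek-general}. For the inductive step, I would first apply Lemma~\ref{lemma-subsequence} to pass to a subsequence along which all edge unit vectors converge, and then pass to a further subsequence so that for every facet $F_i$ and every $(d-1)$-tuple $\tau$ of its edges, $\sin_{d-1}(\tau)$ converges. By Lemma~\ref{lemma-cont} the limit edge vectors then span a subspace $W\subseteq\mathbb{R}^d$ of dimension at most $d-1$.

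Next I would split into two cases. In \emph{Case 1} some facet $F_i$ has every $(d-1)$-tuple of its edges with $\sin_{d-1}\to 0$; then the sequence $F_i^n$ of $(d-1)$-simplices violates Definition~\ref{def-semiregular-family}, so by the inductive hypothesis it violates Definition~\ref{def-krizek-general}, yielding a subsimplex of $F_i^n$ (hence of $S_n$) with a dihedral angle tending to $\pi$. In \emph{Case 2} every facet $F_i$ admits some $(d-1)$-tuple with $\sin_{d-1}$ bounded away from zero; Lemma~\ref{lemma-cont}~b) forces this tuple to be linearly independent in the limit, so the limit direction hyperplane of $F_i$ lies in $W$ and has dimension $d-1$, whence it must equal $W$ and $\dim W=d-1$. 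Consequently every outward facet unit normal $n_i^n$ converges (after further subsequence) to one of the two unit vectors $\pm\nu$ spanning $W^\perp$. The standard identity $\sum_{i=0}^d\operatorname{meas}_{d-1}(F_i^n)\,n_i^n=0$ for any simplex rules out all normals having the same sign, so both $+\nu$ and $-\nu$ occur among the $d+1\geq 3$ limits; pigeonholing yields a pair $(i,j)$ with $n_i^n\cdot n_j^n\to 1$, and thus $\cos\beta_{ij}^n=-n_i^n\cdot n_j^n\to -1$, so the dihedral angle $\beta_{ij}^n$ of $S_n$ itself tends to $\pi$, again violating Definition~\ref{def-krizek-general}.

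The step I expect to be the main obstacle is the geometric statement in Case~2: once all facet direction hyperplanes tend to the common $W$, at least two facet outward normals must align in the limit. The key ingredient here is the area-weighted vector identity for outward normals, which rules out the configuration where all normals point to the same side and lets pigeonholing conclude. The remaining work—the successive subsequence extractions and the clean Case~1/Case~2 dichotomy on the facets—is routine bookkeeping.
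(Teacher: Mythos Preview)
Your argument is correct and complete; in particular the Case~2 step works as you outline, since along the chosen subsequence the identity $\sum_{i=0}^d \operatorname{meas}_{d-1}(F_i^n)\,n_i^n=0$ dotted with $\nu$ gives a strictly positive sum if all limits share a sign, a contradiction, and then pigeonholing among $d+1\ge 3$ normals forces two to agree, yielding $\beta_{ij}^n\to\pi$.

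However, your route is genuinely different from the paper's. The paper establishes the direction Definition~\ref{def-krizek-general} $\Rightarrow$ Definition~\ref{def-semiregular-family} \emph{directly and constructively}: given the bound $\gamma_0$ on all subsimplex dihedral angles, it orders the vertices so that the largest dihedral angle $\alpha_{d-1,d}$ of $S$ sits between the facets opposite $A_{d-1}$ and $A_d$, takes by induction $d-1$ good unit edge vectors $\vec{t}_1,\dots,\vec{t}_{d-1}$ inside the facet $A_0\cdots A_{d-1}$, and then extracts one more vector $\vec{t}_d$ from a good $(d-1)$-tuple inside the adjacent facet $A_0\cdots A_{d-2}A_d$, chosen to be incident to $A_d$. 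A height computation then yields the explicit lower bound $\operatorname{meas}_d(\vec{t}_1,\dots,\vec{t}_d)\ge C^2\min\{\sin\gamma_1,\sin\gamma_0\}$, with $\gamma_1$ the regular-simplex dihedral angle. Your proof, by contrast, is a compactness/contrapositive argument hinging on the Minkowski-type normal identity; it is conceptually clean and uses a nice geometric fact the paper does not invoke, but it is non-constructive and does not produce a quantitative relation between the constant $C$ in Definition~\ref{def-semiregular-family} and the bound $\gamma_0$ in Definition~\ref{def-krizek-general}. The paper's construction, being a higher-dimensional version of K\v r\'{\i}\v zek's original tetrahedral argument, buys exactly that explicit dependence.
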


\begin{proof}
  Use the contradiction argument and Theorem~\ref{semi_implies_krizek} we observe  that
  the condition of Definition~\ref{def-semiregular-family} implies the condition in
  Definition~\ref{def-krizek-general}.

To prove the statement of the theorem in the opposite direction, we generalize the construction
proposed by M. K\v r\'{\i}\v zek in \cite{Kri-1992}. (See also Remark \ref{rem_comparison}
following this proof for the precise relationship of two constructions.)
Assume that the condition in Definition~\ref{def-krizek-general} holds.
We show that the generalized maximum angle
  condition in Definition~\ref{def-semiregular-family} also holds.
The proof is by induction on the dimension $d$, where the base case is known ($d=2, 3$).
Let $S\in \mathcal{T}_h\in\mathcal{F}$ be a $d$-dimensional simplex, and let the bound
on the angles be $\gamma_0$ as in Definition~\ref{def-krizek-general}.
By induction, for any subsimplex $S'\subset S$ of dimension $d-1$ with vertex set
contained in the vertex set of $S$, one can choose $d-1$ unit vectors along the
edges so that the $\sin_{d-1}$ applied to these vectors is bounded from below by
some constant.
When restricted to the set of $d$ unit vectors, both $\sin_d$ and $\operatorname{meas}_d$ of the parallelotope spanned by the vectors are continuous functions which are zero precisely on the compact subset of linearly dependent sets of vectors. Therefore the existence of a positive lower bound for the values of one of them on a family implies the existence of a positive lower bound for the other.
Let  $C>0$ be a lower bound for the measure of these parallelotopes.
Order the vertices of $S$ as $A_0\dots A_d$ so that the largest dihedral angle
$\alpha_{d-1,d}$ is between the facets opposite to $A_{d-1}$ and $A_d$.
By induction, choose $d-1$ unit vectors $\vec{t}_1,\dots,\vec{t}_{d-1}$ along
the edges of the subsimplex $A_0\dots A_{d-1}$ so that  $\operatorname{meas}_{d-1}(\vec{t}_1\dots \vec{t}_{d-1})>C$.
Now look at the subsimplex $A_0\dots A_{d-2}A_d$ opposite to $A_{d-1}$.
In this subsimplex, we can also choose $d-1$ unit vectors $\vec{u}_1\dots \vec{u}_{d-1}$ with the same property.
At least one of these must be along an edge incident to $A_d$; define this vector (or one of these vectors) to be $\vec{t}_d$.

Let $y$ be the height of $\vec{t}_d$ over the subspace spanned by
$\{\vec{u}_1\dots \vec{u}_{d-1}\}\setminus \{\vec{t}_d\}$.
Then $y$ is the height of a parallelotope with bounded volume, and we have:
\begin{equation*}
  C<\operatorname{meas}_{d-1}(\vec{u}_1\dots \vec{u}_{d-1}) =
  y\cdot \operatorname{meas}_{d-2}\big(\{\vec{u}_1\dots \vec{u}_{d-1}\}\setminus \{\vec{t}_d\}\big)\leq y\cdot 1.
\end{equation*}
The last inequality holds because all the involved vectors are unit vectors.
Let $z$ be the height of $\vec{t}_d$ over $\{ \vec{t}_1\dots \vec{t}_{d-1} \}$.
Finally,
\begin{equation*}
\begin{array}{lll}
\operatorname{meas}_d(\vec{t}_1\dots \vec{t}_d)&=&z \operatorname{meas}_{d-1}(\vec{t}_1\dots \vec{t}_{d-1})\\
&=&y\sin \alpha_{d-1,d}\operatorname{meas}_{d-1}(\vec{t}_1\dots \vec{t}_{d-1}). \end{array}
\end{equation*}
Now $y\geq C$ and $\operatorname{meas}_{d-1}(\vec{t}_1\dots \vec{t}_{d-1})\geq C$.
Since $\alpha_{d-1,d}$ is the largest dihedral angle, it lies in the interval $(\gamma_1,\gamma_0)$,
where $\gamma_1$ is the dihedral angle in the regular $d$-simplex (see e.g. \cite{Mae-2013}).
Therefore $\sin \alpha_{d-1}\geq \min \{\sin \gamma_1,\sin \gamma_0\}$.
Thus the measure of the spanned parallelotope is bounded from below:
\begin{equation*}
\operatorname{meas}_d(\vec{t}_1\dots \vec{t}_d)\geq C^2 \min \{\sin \gamma_1,\sin \gamma_0\}.
\end{equation*}
We conclude that $\sin_d$ is also bounded from below by \eqref{d-sine}.
\end{proof}
\begin{remark}
  M. K\v r\'{\i}\v zek uses a similar construction in \cite[pp. 517--518]{Kri-1992} to find three
  vectors in a tetrahedron satisfying the same conditions.
  In his proof, he starts with an arbitrary triangle and a large dihedral angle incident to a chosen
  edge of this triangle.
Two unit vectors are chosen along edges in this triangle.
He then chooses a vector based on angles in the other triangle used to compute the dihedral angle.
His choice of two vectors in the arbitrary triangle is similar to our choice of $\vec{t}_1, \dots, \vec{t}_{d-1}$.
His procedure using angles to choose the third vector is equivalent to our choice of $\vec{t}_d$.
Our bounds are slightly easier because we avoid the choice of an arbitrary subsimplex/triangle,
so we will only need bounds for the largest, not the second largest, dihedral angle in a given dimension.
\label{rem_comparison}
\end{remark}

\subsection*{Jamet's definition}
In \cite{Jam}, Jamet estimates interpolation error in terms of an angle $\theta$ defined as follows (with notation adapted to the present article).
Let $E=\{\vec{e}_i\}_{i=1}^d$ be a set of unit vectors in $\mathbb{R}^d$. For any other unit vector $\vec{u}\in S^{d-1}$, define $\theta_i(\vec{u})$ to be the angle between $\vec{u}$ and the line through $\vec{e}_i$.
Then define $\theta$ by
\begin{equation}
\theta = \max_{\vec{u}\in S^{d-1}}\min_{i=1,\dots,d}\theta_i(\vec{u}).
\label{eq-Jamet-condition}
\end{equation}
Jamet obtains formulas bounding interpolation errors where a factor $1/\cos\theta$ appears.
In particular, in Exemple 1 in \cite{Jam}, $E$ is chosen to be a set of unit vectors along the edges of a simplex as in Definition~\ref{def-semiregular-family}.

\begin{definition}
    A family $\mathcal F  = \{\mathcal T_h \}_{h \to 0}$ of face-to-face partitions of a polytope
  $\overline \Omega \subset {\bf R}^d$ into
$d$-simplices is said to satisfy  {\it Jamet's condition} if
    there exists a constant $\theta_0<\pi/2$ such that for all simplices $S\in \mathcal{T}_h\in\mathcal{F}$, one can choose $d$ unit vectors along edges of $S$ in such a way that the $\theta$ computed in Equation~\eqref{eq-Jamet-condition} satisfies $\theta\leq\theta_0$.
\label{def-jamet}
\end{definition}

\begin{theorem}
Jamet's condition in Defintion~\ref{def-jamet} is equivalent to the condition in Definition~\ref{def-semiregular-family}, and consequently also to the condition in Definition~\ref{def-krizek-general}.
\end{theorem}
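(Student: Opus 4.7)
The plan is to mirror the compactness argument used in the proof of Theorem~\ref{theorem-main-equivalence}, by showing that $\sin_d$ and Jamet's angle $\theta$ both give continuous functions on the compact space $(S^{d-1})^d$ whose vanishing loci agree.

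First I would check that the map $E = (\vec{e}_1,\dots,\vec{e}_d) \mapsto \theta(E)$ defined by \eqref{eq-Jamet-condition} is continuous on $(S^{d-1})^d$. Each $\theta_i(\vec{u})$ depends continuously on $\vec{u}$ and $\vec{e}_i$; the pointwise minimum over finitely many continuous functions is continuous, and the maximum of a continuous function over the compact sphere $S^{d-1}$ depends continuously on the parameter $E$. Next I would verify that $\theta(E) = \pi/2$ holds if and only if there is a unit vector orthogonal to every $\vec{e}_i$, i.e.\ if and only if $\vec{e}_1,\dots,\vec{e}_d$ are linearly dependent. Consequently $\cos\theta(E)$ is a continuous non-negative function on $(S^{d-1})^d$ whose zero set coincides with that of $\sin_d$ by Lemma~\ref{lemma-cont}.

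With these two observations in place, the equivalence follows by the same compactness packaging as in Theorem~\ref{theorem-main-equivalence}. If the generalized maximum angle condition holds with constant $C > 0$, the chosen tuples of $d$ unit edge vectors all lie in the compact subset $\{E \in (S^{d-1})^d : \sin_d(E) \geq C\}$, on which the continuous function $\cos\theta$ is strictly positive and therefore attains a positive minimum $\delta$; this yields Jamet's condition with $\theta_0 = \arccos\delta < \pi/2$. Conversely, a uniform Jamet bound $\theta \leq \theta_0 < \pi/2$ confines the chosen tuples to the compact set $\{E : \theta(E) \leq \theta_0\}$, on which $\sin_d$ is strictly positive and attains a positive minimum $C > 0$, giving the generalized maximum angle condition. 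Equivalence with Definition~\ref{def-krizek-general} then follows from Theorem~\ref{theorem-main-equivalence}.

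The only step requiring genuine thought is the continuity of $\theta$ together with the identification of its zero locus as the linearly dependent tuples; once these are in hand, the remainder is a direct repetition of the compactness principle already deployed earlier in the paper, so I do not anticipate any substantive new obstacle.
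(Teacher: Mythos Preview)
Your proposal is correct and follows essentially the same approach as the paper: both identify that $\theta=\pi/2$ precisely on the locus of linearly dependent $d$-tuples and then invoke the compactness principle from the proof of Theorem~\ref{theorem-main-equivalence} to pass between a uniform bound $\theta\le\theta_0<\pi/2$ and a uniform lower bound on $\sin_d$. You are somewhat more explicit than the paper in verifying the continuity of $E\mapsto\theta(E)$ and in spelling out both directions of the compactness argument, but this is added detail rather than a different strategy.
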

\begin{proof}
If the maximum in Equation~\eqref{eq-Jamet-condition} is $\pi/2$, it is clear that the set $E$ of $d$ vectors is in fact not linearly independent, and that the maximum is attained for any vector which is a normal vector to a $(d-1)$-dimensional subspace that contains all the vectors.
This was observed also by Jamet.
The existence of a bound $\theta_0<\pi/2$ means that the set $E$ is separated from the subset of $\big(S^{d-1}\big)^{d}$ consisting of linearly dependent sets of vectors.
As seen in the proof of Theorem~\ref{theorem-main-equivalence}, this is equivalent to a lower bound for $\sin_d$ applied to the same set of vectors.
This concludes the proof.
\end{proof}

\begin{remark}
A priori, checking a finite number of angles is an easier task than finding the maximum angle $\theta$.
To prove the existence of a bound in one of the conditions given the existence of a bound for the other is simpler than giving formulas.
For $d=2$, this is easy: If $\gamma_0<\pi$ is an upper bound for the angles of the triangles, $\theta_0=\gamma/2$ is an upper bound for the angles in Jamet's condition.
In \cite{Rand}, Rand proves the case $d=3$ of the above theorem by explicitly computing $\theta_0$ given $\gamma_0$ and vice versa.
\end{remark}
%

%
%
%

\ifx\undefined\bysame
\newcommand{\bysame}{\leavevmode\hbox to3em{\hrulefill}\,}
\fi

\end{document}